\newcommand{\RR}{\mathbb{R}}
\newcommand{\CC}{\mathbb{C}}
\newcommand{\rme}{\mathrm{e}}
\newcommand{\matlab}{\textsc{matlab}}
\newcommand{\bb}{\boldsymbol}
\newcommand{\bu}{\bb u}
\newcommand{\bU}{\bb U}
\newcommand{\bx}{\bb x}
\newcommand{\kP}{KronPACK}
\newtheorem{lemma}{Lemma}[section]
\newtheorem{definition}{Definition}[section]
\newtheorem{remark}{Remark}
\pgfplotsset{compat=newest}
\begin{document}
\title[{\small A $\mu$-mode BLAS approach for multidimensional tensor-structured problems}]
      {A $\mu$-mode BLAS approach for multidimensional tensor-structured problems}

\author*[1]{\fnm{Marco} \sur{Caliari}}\email{marco.caliari@univr.it}
\author[2]{\fnm{Fabio} \sur{Cassini}}\email{fabio.cassini@unitn.it}
\equalcont{These authors contributed equally to this work.}
\author[3]{\fnm{Franco} \sur{Zivcovich}}\email{franco.zivcovich@sorbonne-universite.fr}
\equalcont{These authors contributed equally to this work.}
\affil[1]{\orgdiv{Department of Computer Science}, \orgname{University of Verona}, \country{Italy}}
\affil[2]{\orgdiv{Department of Mathematics}, \orgname{University of Trento}, \country{Italy}}
\affil[3]{\orgdiv{Laboratoire Jacques--Louis Lions}, \orgname{Sorbonne University}, \country{France}}


\abstract{In this manuscript, we present a common tensor framework
  which can be used to generalize one-dimensional numerical tasks to
  \textit{arbitrary}
  dimension $d$ by means of tensor product formulas.
This is useful, for example, in the context of multivariate interpolation,
multidimensional function approximation using pseudospectral expansions
and solution of stiff differential equations on tensor product domains.
The key point to obtain an efficient-to-implement BLAS
formulation  consists in the suitable usage of the $\mu$-mode product
(also known as tensor-matrix product or mode-$n$ product) and related
operations, {\color{black}such as the Tucker operator.
Their MathWorks MATLAB\textsuperscript{\textregistered}/GNU Octave
implementations are discussed in the paper, and collected in the
 package \kP.}
We present numerical results on {\color{black} experiments up to dimension six}
from different 
fields of numerical analysis, which show the effectiveness of the approach.}

\keywords{$\mu$-mode product, tensor-structured problems,
exponential of Kronecker sum, ADI preconditioners,
multivariate interpolation, multidimensional spectral transforms}
\maketitle

\section{Introduction} \label{sec:intro}
Many one-dimensional tasks in numerical analysis can be generalized to a 
two-dimensional formulation by means of tensor product formulas. This is the 
case, for example, in the context of spectral decomposition or interpolation 
of multivariate functions.
Indeed, the one-dimensional formula
\begin{equation*}
s_{i}=\sum_{j=1}^{m}t_{j}
\ell_{ij}, \quad 1 \leq i \leq n,
\end{equation*}
where the values $t_j$ are linearly combined to obtain the values $s_i$
(i.e.~$\bb s = L \bb t$, with $\bb s = (s_i) \in \CC^n$, $\bb t = (t_j) \in \CC^m$, 
and $L = (\ell_{ij})\in \CC^{n\times m}$),
can be easily extended to the two-dimensional case as
\begin{equation} \label{eq:gen2dsum}
s_{i_1 i_2}={\color{black}\sum_{j_2=1}^{m_2}\sum_{j_1=1}^{m_1}}t_{j_1j_2}
\ell_{i_1j_1}^1\ell_{i_2j_2}^2, \quad 1 \leq i_1 \leq n_1, \quad 1 \leq i_2 \leq n_2.
\end{equation}
The meaning of the involved scalar quantities depends on the specific 
example under consideration.
In any case, a straightforward implementation of formula~\eqref{eq:gen2dsum} 
requires four nested for-loops, with a resulting computational cost of 
$\mathcal{O}(n^4)$ (if, for simplicity, we consider $m_1=m_2=n_1=n_2=n$). 
On the other hand, formula \eqref{eq:gen2dsum} can be written equivalently 
in matrix formulation as
\begin{equation}\label{eq:gen2dmatmat}
\bb S = L_1 \bb T L_2^{\sf T},
\end{equation}
where $L_1 = (\ell^1_{i_1 j_1})\in \CC^{n_1\times m_1}$,
$L_2 = (\ell^2_{i_2 j_2})\in \CC^{n_2\times m_2}$, 
$\bb T = (t_{j_1 j_2}) \in \CC^{m_1 \times m_2}$ 
and $\bb S = (s_{i_1 i_2}) \in \CC^{n_1 \times n_2}$.
The usage of formula \eqref{eq:gen2dmatmat} requires two separate 
matrix-matrix products as floating point operations, each of which can be 
implemented with three nested for-loops: this approach reduces then the 
cost of computing the elements of $\bb S$ to $\mathcal{O}(n^3)$.
On the other hand, a more efficient way to realize 
formula~\eqref{eq:gen2dmatmat} is to exploit optimized Basic Linear
Algebra Subprograms 
(BLAS) {\color{black}\cite{DDCHD90,mkl,XQY12,cublas}},
which are a set of numerical linear algebra routines 
that perform the just mentioned matrix operations with a level of efficiency 
close to the theoretical hardware limit.
{\color{black}
  A performance comparison of the three approaches to compute the values
  $s_{i_1i_2}$
  in \matlab{}\footnote{We refer to \matlab{} as the common
    language
interpreted by the softwares MathWorks MATLAB\textsuperscript{\textregistered}
and GNU Octave, for instance.} language,
for increasing size of the task, is given in Table~\ref{tab:mmp2drandmatlab}.}%
\begin{table}[ht!]
\centering
\begin{tabular}{|c|c|c|c|c|}
\hline
  & $n=50$ & $n=100$ & $n=200$ & $n=400$\\
 \hline
Nested for-loops & 1.8e-2 & 2.8e-1 & 4.8e0 & 8.0e1\\
Matrix-matrix products (for-loops) & 7.8e-4 & 5.5e-3 & 4.9e-2 & 3.9e-1\\
Matrix-matrix products (BLAS) & 2.1e-5 & 5.6e-5 & 1.7e-4 & 1.2e-3\\
\hline
\end{tabular}
\caption{Wall-clock time (in seconds) for the computation
  of the values $s_{i_1i_2}$ in formula \eqref{eq:gen2dsum} 
  with increasing size $m_1=m_2=n_1=n_2=n$ and different approaches,
  using MathWorks MATLAB\textsuperscript{\textregistered} R2019a.
  The input values are standard normal distributed random numbers.}
\label{tab:mmp2drandmatlab}
\end{table}
As expected, for all the values of $n$ under study, the most efficient way 
to compute the elements of $\bb S$ is {\color{black}realizing
  formula~\eqref{eq:gen2dmatmat}
  through the BLAS approach.}
Remark that the considerations on the complexity cost and BLAS efficiency 
are basically 
language-independent, and apply for other interpreted or compiled
languages as well, like \textsc{Python}, \textsc{Julia}, \textsc{R},
\textsc{Fortran}, and \textsc{C++}.
For clarity of exposition and simplicity of presentation of the codes, we will 
use in this manuscript, from now on, \matlab{} programming language.

In other contexts, such as numerical solution of (stiff) differential
equations on 
two-dimensional tensor product domains by means of exponential integrators or 
preconditioned iterative methods, it is required to compute quantities like
\begin{equation}\label{eq:gen2dkron}
\mathrm{vec}{(\bb S)} = (L_2 \otimes L_1) \mathrm{vec}{(\bb T)},
\end{equation}
being again $L_1$, $L_2$, $\bb T$ and $\bb S$ matrices of suitable size whose 
meaning depends on the specific example under consideration. Here
$\otimes$ denotes the standard Kronecker product of two matrices,
while $\mathrm{vec}$
represents the vectorization operator, see Appendix \ref{app:A} for their
formal {\color{black}definitions}.
A straightforward implementation of formula \eqref{eq:gen2dkron}
would need to assemble the large-sized matrix $L_2 \otimes L_1$. If, 
for simplicity, we consider again
$m_1=m_2=n_1=n_2=n$, this approach requires a storage and a 
computational cost of $\mathcal{O}(n^4)$, 
which is impractical. However, owing to the properties of the Kronecker
product 
(see Appendix~\ref{app:A}), we {\color{black}can see that
  formula~\eqref{eq:gen2dkron} is equivalent to
  formula~\eqref{eq:gen2dmatmat}}. Therefore, all the considerations
made for the previous example on the employment of optimized BLAS apply
also in 
this case.

{\color{black}The aim of this work is to provide a common framework for
  generalizing 
formula \eqref{eq:gen2dmatmat} in \textit{arbitrary} dimension $d$,
which will result in an efficient BLAS realization of the underlying task.
This is very useful in the context of solving tensor-structured problems
which may arise from different scientific and engineering fields.}
The pursued approach is illustrated in detail in Section~\ref{sec:mmp}, in which we
present the $\mu$-mode product and some associated operations (the Tucker 
operator, in particular), both from a theoretical and a practical point of view.
These operations are widely known by the tensor algebra community, but their
usage is mostly restricted in the context of tensor decompositions 
(see \cite{K06,KB09}).
Then, we proceed in Section~\ref{sec:1d2d} by describing more precisely
the one- and two-dimensional formulations of the
problems mentioned in this section, as well 
as their generalization to the $d$-dimensional case in terms of $\mu$-mode 
products.
We collect in Section~\ref{sec:code} the related
numerical experiments and we finally 
draw the conclusions in Section~\ref{sec:conc}.

All the functions and the scripts needed to perform the relevant
tensor operations and
to reproduce the numerical examples of this manuscript are contained
in our \matlab{} package
\kP\footnote{Freely available, under the MIT
  license, at \url{https://github.com/caliarim/KronPACK}}.
\section{The $\mu$-mode product and its applications} \label{sec:mmp}
In order to generalize formula \eqref{eq:gen2dmatmat} to the $d$-dimensional 
case, we rely on some concepts from tensor algebra (see \cite{K06,KB09} 
for more details).
Throughout this section,
we assume that $\bb T\in\CC^{m_1\times\cdots\times m_d}$
is an order-$d$ tensor whose elements are either denoted by $t_{j_1\ldots j_d}$ 
or by $\bb T(j_1,\ldots,j_d)$.
\begin{definition}
  A \emph{$\mu$-fiber} of $\bb T$ is a
  vector in $\CC^{m_\mu}$ obtained by fixing every index of the tensor but 
  the $\mu$th.
\end{definition}
A $\mu$-fiber is nothing but a generalization of the concept of rows and
columns of a matrix. Indeed, for an order-2 tensor (i.e.~a matrix), 
$1$-fibers are the columns, while $2$-fibers are the rows. 
On the other hand, for an
order-3 tensor, $1$-fibers are the column vectors, 
$2$-fibers are the row vectors while $3$-fibers are the so-called 
``page'' or ``tube'' vectors, which means vectors along the third dimension.
{\color{black}%
\begin{definition} \label{def:mumatric}
  The \emph{$\mu$-matricization} of $\boldsymbol{T}$,
denoted by $T^{(\mu)}\in~\CC^{m_\mu \times m_1\cdots m_{\mu-1}m_{\mu+1}\cdots m_d}$, is
defined as the matrix whose columns are the $\mu$-fibers of $\boldsymbol{T}$.
\end{definition}
Remark that for an order-2 tensor the $1$- and
$2$-matricizations simply correspond to the matrix itself and its transpose.
In {\color{black}dimensions} higher than two, the $\mu$-matricization requires the concept of
generalized transpose of a tensor and its unfolding  into a matrix.
The first operation is realized in \matlab{} by the \texttt{permute} function,
that we use to interchange $\mu$-fibers with 1-fibers of the tensor $\boldsymbol{T}$. 
The second operation is performed by the \texttt{reshape} function, that we use 
to unfold the ``transposed'' tensor into the matrix $T^{(\mu)}$.
In \matlab{}, the anonymous function which performs the $\mu$-matricization
of a tensor \texttt{T}, given
\begin{verbatim}

m = size(T);
d = length(m);

\end{verbatim}
can be written as
\begin{verbatim}

mumat = @(T,mu) reshape(permute(T,[mu,1:mu-1,mu+1:d]),...
                        m(mu),prod(m([1:mu-1,mu+1:d])));

\end{verbatim}
By means of $\mu$-fibers, it is possible to define the following operation.
\begin{definition} \label{def:mmp}
  Let $L\in\CC^{n\times m_\mu}$ be a matrix. The \emph{$\mu$-mode product}
  of $\boldsymbol{T}$ with $L$, denoted by $\bb S = \bb T \times_\mu L$, is the
  tensor $\boldsymbol S\in\CC^{m_1\times\cdots\times m_{\mu-1}\times n\times m_{\mu+1}\times\cdots \times m_d}$
  obtained by multiplying the matrix $L$ onto the $\mu$-fibers of $\boldsymbol{T}$.
\end{definition}
From this definition, it appears clear that the $\mu$-fiber
$\boldsymbol S(j_1,\ldots,j_{\mu-1},\cdot,j_{\mu+1},\ldots,j_d)$ of
$\boldsymbol S$ can be computed as the matrix-vector product of $L$ and
the $\mu$-fiber $\boldsymbol T(j_1,\ldots,j_{\mu-1},\cdot,j_{\mu+1},\ldots,j_d)$.
Therefore, the $\mu$-mode product $\boldsymbol{T}\times_\mu L$ might
be performed by calling $m_1\cdots m_{\mu-1}m_{\mu+1}\cdots m_d$ times
level 2 BLAS.
However, owing to the concept of matricization of a tensor
introduced in Definition~\ref{def:mumatric}, it is possible to
perform the same task more efficiently by using a single level 3 BLAS call.
Indeed, the $\mu$-mode 
product of $\boldsymbol{T}$ with  $L$ is just the 
tensor $\boldsymbol S$ such that 
\begin{equation}\label{eq:mmp}
    S^{(\mu)}=L T^{(\mu)}.
\end{equation}}%
In particular, in the two-dimensional setting, the $1$-mode product corresponds 
to the
multiplication $L\boldsymbol T$, while the $2$-mode product corresponds
to $(L\boldsymbol T^{\sf T})^{\sf T} = \boldsymbol TL^{\sf T}$. In general, we can
compute the matrix $S^{(\mu)}$ appearing in formula (\ref{eq:mmp}) as
\texttt{L*mumat(T,mu)}, and in order to recover the tensor
$\boldsymbol{S}$ from $S^{(\mu)}$ we need to invert the operations of unfolding
and ``transposing''. This can be done easily with the aid of the \matlab{} 
functions \texttt{reshape} and \texttt{ipermute}, respectively. 
All in all, given \verb+n = size(L,1)+, the anonymous function that computes the
$\mu$-mode product of an order-$d$ tensor $\bb T$ with $L$ by a single 
matrix-matrix product can be written as
\begin{verbatim}

mump = @(T,L,mu) ipermute(reshape(L*mumat(T,mu),...
                 [n,m([1:mu-1,mu+1:d])]),[mu,1:mu-1,mu+1:d]);

\end{verbatim}
{\color{black}Notice that from formula~\eqref{eq:mmp} it appears clear
  that the computational cost of the
  $\mu$-mode product, in terms of floating point operations, is
  $\mathcal{O}(nm_1\cdots m_d)$.}

  One of the main applications of the $\mu$-mode product is
  the so-called \emph{Tucker operator}, {\color{black} which is implemented
    in \kP{} in the function \verb+tucker+.}
\begin{definition} \label{def:tucker}
  Let $L_\mu\in\CC^{n_\mu\times m_\mu}$ be matrices, with $\mu=1,\ldots,d$.
  The \emph{Tucker operator} of $\boldsymbol{T}$ with $L_1,\ldots,L_d$ is the
  tensor
    $\boldsymbol S\in\CC^{n_1\times\cdots\times n_d}$ obtained by concatenating
    $d$ consecutive $\mu$-mode products with matrices $L_\mu$, that is
\begin{equation}\label{eq:tuckerop}
    \bb S = \boldsymbol{T}\times_1L_1\times_2\cdots\times_dL_d.
\end{equation}
\end{definition}%
We notice that the single element
$s_{i_1\ldots i_d}$ of $\boldsymbol{S}$ in formula
\eqref{eq:tuckerop} turns out to be
\begin{equation}\label{eq:nestedmumode}
  s_{i_1\ldots i_d}={\color{black}\sum_{j_d=1}^{m_d}\cdots \sum_{j_1=1}^{m_1}}
  t_{j_1\ldots j_d}\prod_{\mu=1}^d\ell_{i_\mu j_\mu}^\mu,\quad
  1\le i_\mu\le n_\mu{\color{black},}
\end{equation}
provided that $\ell^\mu_{i_\mu j_\mu}$ are the elements of $L_\mu$.
{\color{black}Hence, as formula~\eqref{eq:nestedmumode} is 
clearly the generalization of formula \eqref{eq:gen2dsum} to the 
$d$-dimensional setting, formula \eqref{eq:tuckerop} is the 
sought $d$-dimensional generalization of formula \eqref{eq:gen2dmatmat}. 
  We also notice that the Tucker operator~\eqref{eq:tuckerop}
  is invariant with respect to the ordering of the  $\mu$-mode products,
  and that the implicit ordering given by Definition~\ref{def:tucker}
  is equivalent to performing the sums
  in formula~\eqref{eq:nestedmumode} starting from the innermost.}

The Tucker operator is strictly connected with the Kronecker product of matrices 
applied to a vector.
\begin{lemma}\label{lem:krontomu}
  Let $L_\mu \in\CC^{n_\mu \times m_\mu}$ be matrices, with $\mu=1,\ldots,d$.
  Then,
    the elements of $\bb S$ in formula \eqref{eq:tuckerop} are equivalently
    given by
    \begin{equation}\label{eq:krontomu}
        \mathrm{vec}(\boldsymbol{S}) = 
        (L_d \otimes \cdots \otimes L_1)\mathrm{vec}(\boldsymbol{T}).
    \end{equation}
\end{lemma}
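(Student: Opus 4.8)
The plan is to prove the identity by induction on $d$, using formula~\eqref{eq:mmp} which characterizes each $\mu$-mode product via matricization. The base case $d=1$ is trivial: formula~\eqref{eq:tuckerop} reduces to $\bb S = \bb T \times_1 L_1$, i.e.\ $S^{(1)} = L_1 T^{(1)}$, and since for an order-1 tensor the $1$-matricization is the identity and $\mathrm{vec}$ coincides with it, we get $\mathrm{vec}(\bb S) = L_1 \mathrm{vec}(\bb T)$ as claimed. The case $d=2$ is exactly the equivalence between formulas~\eqref{eq:gen2dmatmat} and~\eqref{eq:gen2dkron} already discussed in Section~\ref{sec:intro}, relying on the standard Kronecker identity $\mathrm{vec}(L_1 \bb T L_2^{\sf T}) = (L_2 \otimes L_1)\mathrm{vec}(\bb T)$ recalled in Appendix~\ref{app:A}.

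For the inductive step, I would first exploit the order-invariance of the Tucker operator (noted in the paragraph after Definition~\ref{def:tucker}) to peel off the $d$-th mode: write $\bb S = \bb R \times_d L_d$, where $\bb R = \bb T \times_1 L_1 \times_2 \cdots \times_{d-1} L_{d-1} \in \CC^{n_1 \times \cdots \times n_{d-1} \times m_d}$. Applying~\eqref{eq:mmp} with $\mu = d$ gives $S^{(d)} = L_d R^{(d)}$. The key observation is that the $d$-matricization is compatible with vectorization in a simple way: since $\mathrm{vec}$ stacks fibers in colexicographic order with the first index varying fastest, one has $R^{(d)} = \mathrm{reshape}\bigl(\mathrm{vec}(\bb R),\, n_1\cdots n_{d-1},\, m_d\bigr)^{\sf T}$, so that $\mathrm{vec}(\bb S) = \mathrm{vec}(L_d R^{(d)})$ unfolds, via the $d=2$ Kronecker identity, to $(L_d \otimes I_{n_1\cdots n_{d-1}})\mathrm{vec}(\bb R)$. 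By the induction hypothesis applied to the order-$(d-1)$ tensor obtained by freezing the last index (equivalently, applied slice-by-slice along mode $d$), $\mathrm{vec}(\bb R) = (I_{m_d} \otimes (L_{d-1} \otimes \cdots \otimes L_1))\mathrm{vec}(\bb T)$. Multiplying the two block-diagonal-type factors and using the mixed-product property $(A \otimes B)(C \otimes D) = (AC) \otimes (BD)$ from Appendix~\ref{app:A} collapses them into $(L_d \otimes L_{d-1} \otimes \cdots \otimes L_1)\mathrm{vec}(\bb T)$, which is~\eqref{eq:krontomu}.

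The main obstacle I anticipate is purely bookkeeping: making precise the index-ordering convention that links the $\mu$-matricization $T^{(\mu)}$ (Definition~\ref{def:mumatric}) to the vectorization $\mathrm{vec}(\bb T)$, and in particular verifying that the identification $R^{(d)} \leftrightarrow \mathrm{vec}(\bb R)$ reshaping is consistent with the Kronecker ordering $L_d \otimes \cdots \otimes L_1$ (with indices running fastest on the left). An alternative, entirely index-based route avoids this: start from the elementwise formula~\eqref{eq:nestedmumode} for $s_{i_1\ldots i_d}$, and expand the right-hand side of~\eqref{eq:krontomu} componentwise using the definition of the Kronecker product, namely $(L_d \otimes \cdots \otimes L_1)_{\,\overline{i_1\ldots i_d},\,\overline{j_1\ldots j_d}} = \prod_{\mu=1}^d \ell^\mu_{i_\mu j_\mu}$ with $\overline{i_1\ldots i_d}$ the linear index of $\mathrm{vec}$; the two expressions then coincide term by term. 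This second approach sidesteps the induction altogether and may in fact be the cleaner write-up, though it hides the structural role of~\eqref{eq:mmp}.
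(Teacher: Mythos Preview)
Your proposal is correct, and both routes you sketch (induction by peeling off the $d$-th mode, and direct elementwise verification via~\eqref{eq:nestedmumode}) go through without difficulty; the bookkeeping issue you flag is real but benign once you observe that $\mathrm{vec}(\bb S) = \mathrm{vec}\bigl((S^{(d)})^{\sf T}\bigr)$ under the colexicographic convention.

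The paper, however, takes a different and shorter route. Rather than inducting, it invokes directly the matricization identity for the Tucker operator (quoted from Kolda--Bader~\cite{KB09}),
\[
S^{(\mu)} = L_\mu T^{(\mu)}(L_d\otimes \cdots \otimes L_{\mu+1}\otimes L_{\mu-1}\otimes \cdots \otimes L_1)^{\sf T},
\]
specializes to $\mu=1$, and applies the two-dimensional $\mathrm{vec}$-Kronecker identity $\mathrm{vec}(ADC) = (C^{\sf T}\otimes A)\mathrm{vec}(D)$ once, together with $\mathrm{vec}(S^{(1)}) = \mathrm{vec}(\bb S)$. The trade-off is clear: the paper's argument is a three-line application of a known formula but pushes the inductive work into the cited result, whereas your argument is fully self-contained and exposes the recursive structure (at the price of the index bookkeeping you mention). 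Your elementwise alternative is arguably the most transparent of all three, since~\eqref{eq:nestedmumode} and the componentwise expansion of the Kronecker product match by inspection; it is a perfectly acceptable write-up if you prefer to avoid both the induction and the external citation.
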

\begin{proof}
    The $\mu$-mode product satisfies the following property
    \begin{equation*}
      \boldsymbol{S}=\boldsymbol{T} \times_1 L_1 \times_2 \cdots \times_d L_d
      \iff S^{(\mu)} = L_\mu T^{(\mu)}(L_d\otimes \cdots \otimes L_{\mu+1}\otimes L_{\mu-1} \otimes \cdots \otimes L_1)^{\sf T}, 
    \end{equation*}
    see \cite{KB09}. Then, with $\mu=1$ we obtain
    \begin{equation*}
      \boldsymbol{S}=\boldsymbol{T} \times_1 L_1 \times_2 \cdots \times_d L_d
      \iff S^{(1)} = L_1 T^{(1)}(L_d\otimes \cdots \otimes L_2)^{\sf T}.
    \end{equation*}
    By means of the properties of the Kronecker
    product (see Appendix \ref{app:A}) we have then
    \begin{equation*}
      S^{(1)} =L_1 T^{(1)}(L_d\otimes \cdots \otimes L_2)^{\sf T} \iff
      \mathrm{vec}(S^{(1)}) = (L_d\otimes \cdots \otimes L_1)
      \mathrm{vec}( T^{(1)})
    \end{equation*}
    and finally, by definition of vec operator,
    \begin{equation*}
      \mathrm{vec}(S^{(1)}) = (L_d\otimes \cdots \otimes L_1)
      \mathrm{vec}( T^{(1)})
      \iff
      \mathrm{vec}(\bb S) = (L_d\otimes \cdots \otimes L_1) \mathrm{vec}(\bb T).
    \end{equation*}
\end{proof}
Notice that formula~\eqref{eq:krontomu} is precisely the $d$-dimensional generalization of 
formula \eqref{eq:gen2dkron}. Hence,
tasks written as in formula~\eqref{eq:krontomu}
can be equivalently stated and computed more efficiently
again by formula \eqref{eq:tuckerop},
without assembling the large-sized matrix $L_d \otimes \cdots \otimes L_1$.

We can then summarize as follows: the \emph{element-wise} formulation \eqref{eq:nestedmumode},
the \emph{tensor} formulation \eqref{eq:tuckerop} and the \emph{vector} 
formulation \eqref{eq:krontomu} can all be used to compute the entries of the 
tensor $\bb S$. However, in light of the considerations for the $\mu$-mode 
product, only the tensor formulation can be efficiently computed
by $d$ calls of level 3 BLAS,
{\color{black} with
  an overall computational cost of $\mathcal{O}(n^{d+1})$ for the
  case $m_\mu=n_\mu=n$}. This is the reason why the relevant functions
of our package \kP{} 
{\color{black} are based on formulation~\eqref{eq:tuckerop}.
  \begin{remark}
  The implementation
  of a \emph{single} $\mu$-mode product in the function \verb+mump+
  of \kP{} involves two
  explicit permutations of the tensor
  (except the $1$-mode and the $d$-mode products,
  which are realized without explicitly permuting, thanks to the
  design of the function \verb+reshape+ in \matlab).
  On the other hand, the function \verb+tucker+, which realizes
  the Tucker operator~\eqref{eq:tuckerop},
  performs a composition of any pair of consecutive permutations,
  thus reducing their overall number.
  In fact, this is important when dealing with large-sized tensors, because
  the cost of permuting is not negligible
  due to the underlying alteration of the memory layout.
  For this reason, several algorithms which 
  further reduce or completely avoid permutations in an efficient way
  have been developed  (see, for
  instance~\cite{JBPSV15,R16,SB18,M18}).
  In this context, for instance, it is possible
  to use the function \verb+pagemtimes+
  to efficiently realize a ``Loops-over-GEMMs'' strategy.
  However, as this function has been recently introduced in
  MathWorks MATLAB\textsuperscript{\textregistered} R2020b and it is still
  not available in the latest stable GNU Octave release
  7.1.0, for compatibility reasons we do not follow this approach.
\end{remark}}%
Notice that the definition of $\mu$-mode product and its realization through the 
function \verb+mump+ can be easily extended to
the case in which instead of a matrix $L$ we have a
\emph{matrix-free} operator $\mathcal{L}$.
\begin{definition} \label{def:mma}
  Let $\mathcal{L}: \CC^{m_\mu} \to \CC^{n}$ be an operator. Then the
  \emph{$\mu$-mode action} of $\boldsymbol{T}$ with $\mathcal{L}$,
  still denoted $\bb S = \bb T \times_\mu \mathcal{L}$, is the tensor
  $\boldsymbol S\in\CC^{m_1\times\cdots\times m_{\mu-1}\times n\times m_{\mu+1}\times\cdots \times m_d}$
  obtained by the action of the operator $\mathcal{L}$ on the $\mu$-fibers 
  of $\boldsymbol{T}$.
\end{definition}
In \matlab{}, if the operator $\mathcal{L}$ is represented by the 
function \verb+Lfun+ which operates on columns, we can implement the 
$\mu$-mode action by
\begin{verbatim}

mumpfun = @(T,Lfun,mu) ipermute(reshape(Lfun(mumat(T,mu)),...
                       [n,m([1:mu-1,mu+1:d])]),[mu,1:mu-1,mu+1:d]);

\end{verbatim}
The corresponding generalization of the Tucker operator, denoted again by
\begin{equation}\label{eq:tuckerfun}
  \bb S = \bb T \times_1 \mathcal{L}_1 \times_2 \cdots \times_d \mathcal{L}_d
\end{equation}
{\color{black}and implemented
    in \kP{} in the function \verb+tuckerfun+},
follows straightforwardly. Clearly, in this case, some properties of the
Tucker operator \eqref{eq:tuckerop}, such as the
{\color{black}aforementioned} invariance with respect to the ordering
of the $\mu$-mode product operations, may not hold anymore for generic 
operators $\mathcal{L}_\mu$.
Generalization~\eqref{eq:tuckerfun} is useful in some instances,
see Remark~\ref{rem:interpfun}
and Section~\ref{sec:specdecex} for an example.
We remark that such an extension is not
available in some other popular tensor algebra toolboxes, such as
\emph{Tensor Toolbox for MATLAB} \cite{BKO21} --- which does not
have GNU Octave support, too --- and \emph{Tensorlab}
\cite{VDSVBDL16}, both of which
are more devoted to tensor decomposition and related topics.

  The  $\mu$-mode product is {\color{black}also useful for computing}
  the action of the Kronecker sum (see Appendix \ref{app:A} for its definition)
  of the $L_\mu$ matrices to a vector $\bb v$, that is
  \begin{equation}\label{eq:actkron}
    (L_d\oplus\cdots\oplus L_1)\bb v=\mathrm{vec}\left(\sum_{\mu=1}^{d}(\bb V \times_\mu L_\mu)\right),
  \end{equation}
  where $\bb v=\mathrm{vec}(\bb V)$. In fact, as it can be noticed
  from formula~\eqref{eq:mmp}, the identity matrix is the identity element of the
  $\mu$-mode product. Combining this observation with Lemma \ref{lem:krontomu}, 
  we easily obtain formula \eqref{eq:actkron}.
  In our package \kP, the matrix resulting from the Kronecker sum on the 
  left hand side of equality~\eqref{eq:actkron} can be computed as \verb+kronsum(L)+, 
  where
  \verb+L+ is the cell array containing $L_\mu$ in \verb+L{mu}+.
  On the other hand, its action on $\bb v$ can be computed equivalently
  in tensor formulation, without forming the matrix itself, by 
  \verb+kronsumv(V,L)+.

\section{Problems formulation in $d$ dimensions} \label{sec:1d2d}
In this section we discuss in more detail the problems 
that were briefly introduced in Section \ref{sec:intro}. Their 
generalization to arbitrary dimension $d$ is addressed thanks to
the common framework presented in Section \ref{sec:mmp}.

\subsection{Pseudospectral decomposition}\label{sec:specdec12d}
Suppose that a function $f\colon R\to\CC$, with $R\subseteq\RR$,
can be expanded into a series
\begin{equation*}
    f(x)=\sum_{i=1}^\infty f_i\phi_i(x),
\end{equation*}
where $f_i$ are complex scalar coefficients and $\phi_i(x)$ are 
complex functions orthonormal with respect to the 
standard $L^2(R)$ inner product, i.e.
\begin{equation*}
    \int_R\phi_i(x)\overline{\phi_j(x)}dx=\delta_{ij}{\color{black},} \quad \forall i,j.
\end{equation*}
Then, the spectral coefficients $f_i$ are defined by
\begin{equation*}
    f_i=\int_R f(x)\overline{\phi_i(x)}dx{\color{black},}
\end{equation*}
and can be approximated by a quadrature formula.
Usually, in this context, specific Gaussian quadrature formulas are employed, 
whose node and weights vary depending on the chosen family of basis functions.  
If we consider $q$
quadrature nodes $\xi^k$ and weights $w^k$, we
can compute the first $m$  \emph{pseudospectral}
  coefficients by
\begin{equation*}
  \hat f_i=\sum_{k=1}^q f(\xi^k)\overline{\phi_i(\xi^k)}w^k\approx
  f_i,\quad 1\le i\le m.
\end{equation*}
By collecting the values $\overline{\phi_i(\xi^k)}$
in position
$(i,k)$ of the matrix $\Psi\in\CC^{m\times q}$ and the values $f(\xi^k)w^k$
in the vector $\bb f_{\bb w}$, we can compute the pseudospectral
coefficients by
means of the single matrix-vector product
\begin{equation*}
    \hat {\bb f}=\Psi\bb f_{\bb w}.
\end{equation*}

In the two-dimensional case, the coefficients of a pseudospectral expansion in 
a tensor product basis (see, for instance, \cite[Ch.~6.10]{B00}) are given by
\begin{equation*}
    \hat f_{i_1 i_2}={\color{black}\sum_{k_2=1}^{q_2}\sum_{k_1=1}^{q_1}}
    f(\xi_1^{k_1},\xi_2^{k_2})\overline{\phi_{i_1}^1(\xi_1^{k_1})}
    \overline{\phi_{i_2}^2(\xi_2^{k_2})}w_1^{k_1}w_2^{k_2},
\end{equation*}
which can be efficiently computed as
\begin{equation*}
    \hat {\bb F}=\Psi_1 \bb F_{\bb W} \Psi_2^{\sf T},
\end{equation*}
where $\Psi_\mu\in\CC^{m_\mu\times q_\mu}$ has element $\overline{\phi_{i_\mu}^\mu(\xi_\mu^{k_\mu})}$ in
position $(i_\mu,k_\mu)$, with $\mu=1,2$, and $\bb F_{\bb W}$ is the
matrix with
element $f(\xi_1^{k_1},\xi_2^{k_2})w_1^{k_1}w_2^{k_2}$
in position $(k_1,k_2)$.

In general, the coefficients of a  $d$-dimensional pseudospectral expansion 
in a tensor product basis are given by
\begin{equation*}
  \hat f_{i_1 \ldots i_d}={\color{black}\sum_{k_d=1}^{q_d}\cdots\sum_{k_1=1}^{q_1}}
  f(\xi_1^{k_1},\ldots,\xi_d^{k_d})\overline{\phi_{i_1}^1(\xi_1^{k_1})}\cdots
  \overline{\phi_{i_d}^d(\xi_d^{k_d})}w_1^{k_1}\cdots w_d^{k_d}.
\end{equation*}
In tensor formulation, the coefficients can be computed as
(see formulas \eqref{eq:tuckerop} and \eqref{eq:nestedmumode})
\begin{equation*}
      \hat{\bb F} = \bb F_{\bb W} \times_1 \Psi_1 \times_2 \cdots \times_d \Psi_d,
\end{equation*}
where $\Psi_\mu$ is the transform matrix with
element $\overline{\phi_{i_\mu}^\mu(\xi_\mu^{k_\mu})}$
in position $(i_\mu,k_\mu)$,  and we collect in the
order-$d$ tensors $\hat{\bb F}$ and $\bb F_{\bb W}$ the values
$\hat f_{i_1 \ldots i_d}$ and
$f(\xi_1^{k_1},\ldots,\xi_d^{k_d})w_1^{k_1}\cdots w_d^{k_d}$,
respectively.
The corresponding pseudospectral approximation of $f(\bb x)$ is
\begin{equation}\label{eq:specnd}
  \hat{f}(\bb x)={\color{black}\sum_{i_d=1}^{m_d}\cdots\sum_{i_1=1}^{m_1}}
  \hat{f}_{i_1 \ldots i_d}
  \phi_{i_1}^1(x_1)\cdots\phi_{i_d}^d(x_d),
\end{equation}
where $\bb x = (x_1,\ldots,x_d)$.
An application to Hermite--Laguerre--Fourier function decomposition is given
in Section~\ref{sec:specdecex}.

\subsection{Function approximation} \label{sec:funcapprox12d}
Suppose we are given an approximation of a univariate function $f(x)$ in the form
\begin{equation}\label{eq:approxser}
\tilde f(x)=\sum_{i=1}^m c_i \phi_i(x)\approx f(x),
\end{equation}
where $c_i$ are scalar coefficients and
$\phi_i(x)$ are generic (basis)
functions. This is the case, for example, in the context of
function interpolation or pseudospectral expansions. We are interested in the
evaluation of formula \eqref{eq:approxser} at given points
$x^\ell$, with $1\leq\ell\leq n$. This can be easily realized in a single
matrix-vector product: indeed, if we collect the coefficients $c_i$ in the
vector $\bb c \in \CC^m$ and we form the matrix
$\Phi\in\CC^{n\times m}$ with element $\phi_i(x^\ell)$ in position $(\ell,i)$,
the sought evaluation is given by
\begin{equation*}
\tilde{\bb f}=\Phi\bb c,
\end{equation*}
being $\tilde{\bb f}\in \CC^{n}$ the vector containing the approximated
function at the given set of evaluation points.

The extension of formula \eqref{eq:approxser} to the tensor product
bivariate case 
is straightforward (see, for instance, \cite[Ch.~XVII]{dB01}).
Indeed, in this case the approximating function is given by
\begin{equation}\label{eq:interp2d}
\tilde f(x_1,x_2)={\color{black}\sum_{i_2=1}^{m_2}\sum_{i_1=1}^{m_1}}
c_{i_1 i_2}\phi_{i_1}^1(x_1)\phi_{i_2}^2(x_2)\approx f(x_1,x_2),
\end{equation}
where $c_{i_1 i_2}$ represent scalar coefficients and
$\phi_{i_\mu}^\mu(x_\mu)$ the (univariate) basis function, with
$1\leq i_\mu \leq m_\mu$ and $\mu=1,2$.
Then, given a Cartesian grid of points $(x_1^{\ell_1},x_2^{\ell_2})$,
with $1\leq \ell_\mu \leq n_\mu$,
the evaluation of approximation \eqref{eq:interp2d} can be
computed efficiently in matrix formulation by
\begin{equation*}
\tilde {\bb F} = \Phi_1 \bb C \Phi_2^{\sf T}.
\end{equation*}
Here we collected the function evaluations
$\tilde{f}(x_1^{\ell_1},x_2^{\ell_2})$ in the matrix
$\tilde {\bb F}$, we formed the matrices
$\Phi_{\mu}\in\CC^{n_\mu\times m_\mu}$ of element
$\phi^\mu_{i_\mu}(x_\mu^{\ell_\mu})$ in
position $(\ell_\mu,i_\mu)$, and we let $\bb C$ be the matrix
of element $c_{i_1 i_2}$ in position $(i_1,i_2)$.

In general, the approximation of a $d$-variate function $f$ with tensor product 
basis functions is given by
\begin{equation}\label{eq:interpnd}
        \tilde f(\bb x)={\color{black}\sum_{i_d=1}^{m_d}\cdots\sum_{i_1=1}^{m_1}}
        c_{i_1\ldots i_d}\phi_{i_1}^1(x_1)\cdots\phi_{i_d}^d(x_d)\approx
        f(\bb x),
\end{equation}
where $c_{i_1\ldots i_d}$ represent scalar
coefficients while $\phi_{i_\mu}^\mu(x_\mu)$ the (univariate) basis functions,
with $1\leq i_\mu \leq m_\mu$.
Then, given a Cartesian grid of points
$(x_1^{\ell_1},\ldots,x_d^{\ell_d})$, with $1\leq \ell_\mu \leq n_\mu$,
the evaluation of approximation \eqref{eq:interpnd} can be expressed in
tensor formulation as
\begin{equation}\label{eq:tensinterp}
        \tilde {\bb F} = \bb C \times_1 \Phi_1 \times_2 \cdots \times_d \Phi_d,
\end{equation}
see formulas \eqref{eq:tuckerop} and \eqref{eq:nestedmumode}. Here we denote
$\Phi_{\mu}$ the matrix with element
$\phi_{i_\mu}^\mu(x_\mu^{\ell_\mu})$ in
position $(\ell_\mu,i_\mu)$, and we collect in the order-$d$
tensors $\bb C$ and $\tilde{\bb F}$ the coefficients and the resulting function
approximation at the evaluation points, respectively. 
We present an application to barycentric multivariate interpolation in 
Section~\ref{sec:funcapproxex}.

\begin{remark}
    Clearly, formula \eqref{eq:tensinterp} can be employed to evaluate a 
    pseudospectral approximation \eqref{eq:specnd} at a generic Cartesian grid of points, by 
    properly defining the involved tensor $\bb C$ and matrices $\Phi_\mu$.
    In the context of direct and inverse spectral transforms, for example 
    for the effective numerical solution of differential equations
    (see \cite{CCEOZ22}),
    one could be interested in the evaluation of 
    pseudospectral decompositions at the same grid of quadrature points 
    $(\xi_1^{k_1},\ldots,\xi_d^{k_d})$ used to approximate the spectral coefficients.
    Under standard hypothesis,
    this can be done by applying formula~\eqref{eq:tensinterp} with
    matrices $\Phi_\mu = \Psi_\mu^*$, where the symbol $*$ denotes the conjugate 
    transpose. Without forming explicitly the matrices $\Phi_\mu$, the desired 
    evaluation can be computed using the matrices $\Psi_\mu$ by means of the
    \kP{} function \verb+cttucker+.
\end{remark}
\begin{remark}\label{rem:interpfun}
  Several functions which perform
  the whole one-dimensional procedure of approximating a function and 
  evaluating it on a set of points, given suitable inputs, are available.
  {\color{black}
  This is the case, for example in the interpolation context, of the \matlab{}
  built-in functions \verb+spline+, \verb+interp1+ (that
  performs different kind of
  one-dimensional interpolations),  and \verb+interpft+
  (which performs a resample of the input values by means of FFT techniques),
  or of the functions provided by the
  \textsc{QIBSH++}   library \cite{BM21} in the approximation context.}
  Yet, it is possible to extend the usage of this kind of 
  functions to the approximation in the $d$-dimensional
  tensor setting by means of
  concatenations of $\mu$-mode actions (see Definition \ref{def:mma}), 
  yielding the generalization of the Tucker operator~\eqref{eq:tuckerfun}.
  In practice, we can perform this task with the \kP{}
  function \verb+tuckerfun+, see the numerical example
  in Section~\ref{sec:specdecex}.
\end{remark}

\subsection{Action of the matrix exponential}\label{sec:exp12d}
Suppose we want to solve the linear Partial Differential Equation
(PDE)
\begin{equation}\label{eq:heat1D-cont}
\left\{
\begin{aligned}
  \partial_tu(t,x) &= \mathcal{A}u(t,x), \quad t>0,
  \quad x\in\Omega\subset\RR{\color{black},}\\
  u(0,x)&=u_0(x),
\end{aligned}\right.
\end{equation}
coupled with suitable boundary conditions, where $\mathcal{A}$ is a linear
time-independent spatial (integer or fractional) differential operator,
typically stiff.
The application of the method of lines to equation \eqref{eq:heat1D-cont}, by
discretizing first the spatial variable, e.g.~by finite differences or spectral
differentiation, leads to the system of Ordinary Differential Equations
(ODEs)
\begin{equation}\label{eq:heat1D-fd}
    \left\{
    \begin{aligned}
        \bu'(t) &= A\bu(t), \quad t>0{\color{black},}\\
        \bu(0)&=\bu_0{\color{black},}
    \end{aligned}\right.
\end{equation}
for the unknown vector $\bu(t)$.
Here, $A\in\CC^{n\times n}$
is the matrix which approximates the differential operator
$\mathcal{A}$ on the grid points $x^\ell$,
with $1\le \ell\le n$. The exact solution
of system~\eqref{eq:heat1D-fd} is obviously
$\bu(t)=\exp(tA)\bu_0$ and, if the size of $A$ allows, it can be effectively
computed by Pad\'e or Taylor approximations (see~\cite{AMH09,CZ19}).
If the size of $A$ is too large, then one has to rely on algorithms to
approximate  the action of the matrix exponential $\exp(tA)$
on the vector $\bu_0$. Examples of such methods are \cite{AMH11,NW12,GRT18,CCZ20}.

Suppose now we want to solve instead
\begin{equation}\label{eq:heat2D-cont}
    \left\{
    \begin{aligned}
      \partial_tu(t,x_1,x_2) &= \mathcal{A}u(t,x_1,x_2),
      \quad t>0, \quad (x_1,x_2)\in\Omega\subset\RR^2{\color{black},}\\
        u(0,x_1,x_2)&=u_0(x_1,x_2),
    \end{aligned}\right.
\end{equation}
coupled again with suitable boundary conditions.
If PDE \eqref{eq:heat2D-cont} admits a Kronecker 
structure, such as for some linear Advection--Diffusion--Absorption (ADA)
equations on tensor product domains or linear Schr\"odinger equations with
a potential in Kronecker form (see \cite{CCEOZ22}
for more details and
examples),
then the method of lines yields the system
of ODEs
\begin{equation}\label{eq:heat2D-fd}
    \left\{\begin{aligned}
    \bu'(t) &= \left(I_2\otimes A_1+A_2 \otimes I_1\right)
    \bu(t), \quad t>0{\color{black},} \\
        \bu(0)&=\bu_0.
    \end{aligned}\right.
\end{equation}
Here $A_\mu$, with $\mu=1,2$, represent the one-dimensional stencil matrices
corresponding to the discretization of the one-dimensional differential 
operators that constitute $\mathcal{A}$ on the grid points $x_\mu^{\ell_\mu}$,
with $1\le \ell_\mu\le n_\mu$. 
Moreover, the notation $I_\mu$ stands for identity
matrices of size $n_\mu$, and 
the component $\ell_1+(\ell_2-1)n_1$ of $\bu$
corresponds to the grid point
$(x_1^{\ell_1},x_2^{\ell_2})$, that is
\begin{equation*}
    u_{\ell_1+(\ell_2-1)n_1}(t)\approx u(t,x_1^{\ell_1},x_2^{\ell_2}).
\end{equation*}
This, in turn, is consistent with the linearization of the indexes of
{\color{black} the $\mathrm{vec}$ operator defined in Appendix \ref{app:A}}.

Clearly, the solution of system~\eqref{eq:heat2D-fd} is given by
\begin{equation}\label{eq:2D-discrete}
    \bu(t) = \exp\left(t(I_2\otimes A_1+A_2 \otimes I_1)\right)\bu_0,
\end{equation}
which again could be computed by any method to compute the action of the matrix
exponential on a vector. Remark that, since the matrices
$I_2\otimes A_1$ and $A_2\otimes I_1$ commute and using the properties of the
Kronecker product (see Appendix \ref{app:A}), one could write everything in terms of
the exponentials of the \textit{small-sized} matrices $A_\mu$. Indeed, we have
\begin{equation*}
    \begin{split}
      \bu(t) &= \exp\left(t(I_2\otimes A_1+A_2 \otimes I_1)\right)\bu_0 =
          \exp(t(I_2\otimes A_1))\exp(t(A_2 \otimes I_1))\bu_0  \\
          &= \left(I_2 \otimes \exp(tA_1)\right)\left(\exp(tA_2)\otimes I_1\right)\bu_0
          = (\exp(tA_2)\otimes\exp(tA_1))\bu_0.
    \end{split}
\end{equation*}
However, as in general the matrices $\exp(tA_\mu)$ are full, their Kronecker 
product results in a large and full matrix to be multiplied into $\bu_0$, 
which is an extremely inefficient approach. Nevertheless, if we fully exploit 
the tensor structure of the problem, we can still compute the solution of the 
system efficiently just in terms of the exponentials $\exp(tA_\mu)$.
Indeed, let $\bU(t)$ be the $n_1\times n_2$ matrix whose stacked columns form 
the vector $\bu(t)$, that is
\begin{equation*}
\mathrm{vec}(\bU(t))=\bu(t).
\end{equation*}
Then, using this matrix notation and by means of the properties of the
Kronecker product, problem \eqref{eq:heat2D-fd} takes the form
\begin{equation*}
    \left\{\begin{aligned}
        \bU'(t) &= A_1\bU(t) + \bU(t) A_2^{\sf T}, \quad t>0{\color{black},}\\
        \bU(0)&= \bU_0,
    \end{aligned}\right.
\end{equation*}
and it is well-known (see~\cite{N69}) that its solution can be computed
in matrix formulation as
\begin{equation*}
    \bU(t) =  \exp(t A_1)\bU_0\exp(t A_2)^{\sf T}.
\end{equation*}

In general, the $d$-dimensional version of solution~\eqref{eq:2D-discrete} is
\begin{equation*}
  \bu(t) =
  \exp\left(t\sum_{\mu=1}^d \left(I_{d}\otimes \cdots \otimes I_{\mu+1}\otimes
  A_{\mu}\otimes I_{\mu-1}\otimes \cdots \otimes I_1\right)\right)\bu_0,
\end{equation*}
which can be written in more compact notation as
\begin{equation} \label{eq:expmat}
    \bu(t)=\exp\left(t\left(A_d\oplus \cdots \oplus A_1\right)\right)\bu_0.
\end{equation}
Here, $A_\mu$ are square matrices of size $n_\mu$, and
$\bu_0$ is a vector of length $N=n_1\cdots n_{d}$. Then, similarly
to the two-dimensional case, we have
\begin{equation*}
  \bb u(t)=\exp\left(t(A_d\oplus \cdots \oplus A_1)\right)\bu_0
=(\exp(tA_d)\otimes\cdots\otimes\exp(tA_1))\bu_0.
\end{equation*}
Finally, using Lemma \ref{lem:krontomu}, we have
\begin{equation}\label{eq:exptucker}
  \bb U(t) = \bb U_0 \times_1 \exp(tA_1) \times_2 \cdots \times_d \exp(tA_d),
\end{equation}
where $\bb U(t)$ and $\bb U_0$ are $d$-dimensional tensors such that 
$\bb u(t) = \mathrm{vec}(\bb U(t))$ and $\bb u_0 = \mathrm{vec}(\bb U_0)$. 
Hence, the action of the large-sized matrix exponential appearing in formula
\eqref{eq:expmat} can be computed by the Tucker operator \eqref{eq:exptucker}
which just involves the small-sized matrix exponentials $\exp(t A_\mu)$.
For an application in the context of solution of an 
ADA linear evolutionary equation with spatially variable coefficients,
see Section~\ref{sec:expex}.

\subsection{Preconditioning of linear systems}\label{sec:prec12d}
Suppose we want to solve the semilinear PDE
\begin{equation}\label{eq:semilin1D-cont}
    \left\{
    \begin{aligned}
        \partial_tu(t,x) &= \mathcal{A}u(t,x) + f(t,u(t,x)), 
        \quad t>0, \quad x\in\Omega\subset\RR{\color{black},}\\
        u(0,x)&=u_0(x),
    \end{aligned}\right.
\end{equation}
coupled with suitable boundary conditions, where $\mathcal{A}$ is a linear 
time-independent spatial differential operator and $f$ is a nonlinear 
function. 
Using the method of lines, similarly to what led to 
system \eqref{eq:heat1D-fd}, we obtain
\begin{equation}\label{eq:semilin1D-disc}
    \left\{\begin{aligned}
        \bu'(t) &=A\bu(t)+\bb f(t,\bu(t)), \quad t>0{\color{black},} \\
        \bu(0)&=\bu_0.
    \end{aligned}\right.
\end{equation}
A common approach to {\color{black}integrating system \eqref{eq:semilin1D-disc}}
in time 
involves the use of IMplicit EXplicit (IMEX) schemes. For instance, the
application of the well-known backward-forward Euler method with
constant time step $\tau$ leads to the solution of the linear system
\begin{equation*}
M\bb u_{k+1}= \bb u_k + \tau \bb f (t_k,\bb u_k)
\end{equation*}
at every time step, where $M=(I-\tau A)\in\CC^{n\times n}$
and $I$ is an identity matrix of suitable
size. If the space discretization allows (second order centered finite
differences, for example), the system can then be solved by means of the
very efficient Thomas algorithm. If, on the other hand, this is not the case,
a suitable direct or (preconditioned) iterative method can be employed.

Let us consider now the two-dimensional version of the semilinear
PDE~\eqref{eq:semilin1D-cont}, i.e.
\begin{equation}\label{eq:semilin2D-cont}
    \left\{
    \begin{aligned}
        \partial_tu(t,x_1,x_2) &= \mathcal{A}u(t,x_1,x_2) + f(t,u(t,x_1,x_2)), 
        \quad t>0, \quad (x_1,x_2)\in\Omega\subset\RR^2,\\
        u(0,x_1,x_2)&=u_0(x_1,x_2), 
    \end{aligned}\right.
\end{equation}
again with suitable boundary conditions, $\mathcal{A}$ a linear 
time-independent spatial differential operator and $f$ a nonlinear function. 
As for
equation~\eqref{eq:heat2D-cont}, if the PDE admits a Kronecker sum
structure, the 
application of the method of lines leads to 
\begin{equation} \label{eq:semilin2D-discvec}
\left\{\begin{aligned}
\bu'(t)&=(I_2\otimes A_1 + A_2\otimes I_1)\bu(t)+\bb f(t,\bu(t)),
\quad t>0{\color{black},}\\
    \bu(0)&=\bu_0,
\end{aligned}\right.
\end{equation}
which can be integrated in time again
by means of the backward-forward Euler method.
{\color{black}The matrix of the resulting linear system to
be solved at every time step is now}
\begin{equation*}
  M=I_2\otimes M_1+M_2\otimes I_1=
  I_2\otimes \left(\frac{1}{2}I_1-\tau A_1\right)+
  \left(\frac{1}{2}I_2-\tau A_2\right)\otimes I_1.
\end{equation*}
If we use an iterative method, we
{\color{black}
can obtain the action of the matrix $M$ to a vector $\bb v$ as
\begin{equation*}
  M_1\bb V+\bb VM_2^{\sf T} = \bb V_{\!M},\quad
  \mathrm{vec}(\bb V)=\bb v,
\end{equation*}
by observing that
\begin{equation*}
M \bb v=\mathrm{vec}(\bb V_{\!M}).
\end{equation*}}%

Moreover, examples of effective preconditioners for this kind of linear 
systems are the ones of Alternating Direction Implicit
(ADI) type (see \cite{AR20}).
In this case, {\color{black}we can use the product of the matrices 
  arising from the discretization of equation~\eqref{eq:semilin2D-cont} after
  neglecting all the spatial variables but one in the operator $\mathcal{A}$}.
We obtain then the preconditioner
\begin{equation}\label{eq:prec}
  (I_2-\tau A_2)\otimes(I_1-\tau A_1) = P_2\otimes P_1 = P,
\end{equation}
which is expected to be effective since $P=M+\mathcal{O}(\tau^2)$.
In addition, the action of $P^{-1}$ to a vector $\bb v$ can be
efficiently obtained as
\begin{equation*}
P_1^{-1}\bb VP_2^{-\sf{T}} = \bb V_{\!P^{-1}},
\end{equation*}
by noticing that
\begin{equation*}
  P^{-1}\bb v=(P_2^{-1}\otimes P_1^{-1})\bb v=\mathrm{vec}(\bb V_{\!P^{-1}}).
  \end{equation*}
\begin{remark}
  Another approach of solution to equation \eqref{eq:semilin2D-discvec} would
  be to write the equivalent matrix formulation of the problem, i.e.
\begin{equation*}
\left\{
\begin{aligned}
\bU'(t)&=A_1 \bU(t)+\bU(t)A_2^{\sf T}+\bb F(t,\bU(t)), \quad t>0{\color{black},}\\
\bU(0)&=\bU_0,
\end{aligned}\right.
\end{equation*}
and then apply appropriate algorithms to integrate it numerically,
mainly based on the solution of Sylvester equations.
This is the approach pursued, for example, in \cite{KS20}.
\end{remark}

In general, for a $d$-dimensional semilinear problem with a Kronecker sum
structure,
the linear system to be solved at every time step {\color{black}
  has now matrix}
\begin{equation*}
M = M_d \oplus \cdots \oplus M_1, \quad 
M_\mu=\left(\frac{1}{d}I_\mu-\tau A_\mu\right).
\end{equation*}
Again, the action of the matrix $M$ on a vector $\bb v$ can be computed without
assembling the matrix (see equivalence \eqref{eq:actkron}).
Finally, an effective preconditioner for the
linear system is a straightforward generalization of
formula~\eqref{eq:prec}, i.e. 
\begin{equation*}
  (I_d-\tau A_d)\otimes\cdots\otimes(I_1-\tau A_1)=
  P_d\otimes\cdots\otimes P_1 = P.
\end{equation*}
Similarly to the two-dimensional case, its inverse action to a vector
$\bb v$ can be computed efficiently  as
\begin{equation}\label{eq:preconnd}
\bb V \times_1  P_1^{-1} \times_2 \cdots \times_d P_d^{-1} = \bb V_{\!P^{-1}},
\end{equation}
see Lemma \ref{lem:krontomu}.
{\color{black}In our package \kP{}, formula \eqref{eq:preconnd} can be realized
  without explicitly inverting the matrices $P_\mu$ by using the
  function \verb+itucker+. We notice that this is another feature
  not available in
  the tensor algebra toolboxes mentioned in Section~\ref{sec:mmp}.}
For an example of application of these techniques, in the context of solution
of evolutionary diffusion--reaction equations, see Section~\ref{sec:precex}.

We finally notice that there exist also specific techniques to solve
linear systems in
Kronecker form, usually arising in the discretization of time-independent
differential equation, see for instance \cite{PS16,CK20}.

\section{{\color{black}Numerical experiments}} \label{sec:code}
We present in this section some numerical experiments of
the proposed $\mu$-mode approach for tensor-structured problems,
which make extensively use of the functions contained in our package \kP{}.
{\color{black}We remark that, when we employ Cartesian grids of points,
they  have been
produced by the \matlab{} command \verb+ndgrid+.}
If instead one would prefer
to use the ordering induced by the \verb+meshgrid+ command (which,
however, works only up to dimension three), it is enough to
interchange the first and the second matrix in the Tucker
operator~\eqref{eq:tuckerop}.
The resulting tensor is then the $(2,1,3)$-permutation of $\bb S$
in Definition~\ref{def:tucker}.

All the numerical experiments have been performed
with MathWorks MATLAB\textsuperscript{\textregistered} R2019a on an
Intel\textsuperscript{\textregistered} Core\textsuperscript{\texttrademark}
i7-8750H CPU with 16 GB of RAM. The degrees of freedom
  of the problems have been kept at a moderate size, in order
  to be reproducible with the package \kP{} in a few seconds on a
  personal laptop.
{\color{black}
\subsection{Code validation}
In this section we validate the \verb+tucker+ function of \kP, by
comparing it to the corresponding functions of the toolboxes
mentioned in Section~\ref{sec:mmp},
i.e.~\verb+ttm+ and \verb+tmprod+ of Tensor Toolbox for MATLAB
and Tensorlab, respectively. We performed several tests on tensors
of different orders and sizes and the three functions always produced the
same output (up to round-off unit) at comparable computational times.
For simplicity of exposition, we report in Figure~\ref{fig:codeval}
just the wall-clock times of the experiments with tensors of order
$d=3$ and $d=6$. For each selected value of $d$, we take
as tensors and matrices sizes $m_\mu=n_\mu=n$,
$\mu=1,\ldots,d$, for different values of $n$,
in such a way that the number of degrees of freedom $n^d$ ranges
from $N_\mathrm{min}=12^6$ to $N_\mathrm{max}=18^6$. The input tensors and
matrices have normal distributed random values, and the complete
code can be found in the script \verb+code_validation.m+.}
\begin{figure}[!ht]
\centering
%
%
\definecolor{myblue}{rgb}{0.00000,0.44700,0.74100}%
\begin{tikzpicture}

\begin{axis}[%
width=1.7in,
height=2.4in,
at={(0.759in,0.481in)},
scale only axis,
xmin=100,
xmax=370,
xtick={144, 196, 256, 324},
xlabel style={font=\color{white!15!black}},
xlabel style={font=\footnotesize},
xticklabel style={font=\footnotesize},
xlabel={$n$},
ymode=log,
ymin=0.01,
ymax=1,
yminorticks=true,
ylabel style={font=\color{white!15!black}},
ylabel style={font=\footnotesize},
yticklabel style={font=\footnotesize},
ylabel={Wall-clock time (s)},
axis background/.style={fill=white}
  ]
  \addplot [thick, color=myblue, mark=x, 
    mark options={solid, myblue}, forget plot]
  table[row sep=crcr]{%
144	3.19e-2\\
196	1.05e-1\\
256	2.77e-1\\
324	6.06e-1\\
};

\addplot [thick, color=red, mark=o, 
  mark options={solid, red}, forget plot]
  table[row sep=crcr]{%
144	2.44e-2\\
196	9.49e-2\\
256	2.63e-1\\
324	5.81e-1\\
};

\addplot [thick, color=green, mark=diamond, 
  mark options={solid, green}, forget plot]
  table[row sep=crcr]{%
144	2.19e-2\\
196	8.36e-2\\
256	2.24e-1\\
324	5.21e-1\\
};

\end{axis}

\begin{axis}[%
width=1.7in,
height=2.4in,
at={(3in,0.481in)},
scale only axis,
xmin=10,
xmax=20,
xtick={12, 14, 16, 18},
xlabel style={font=\color{white!15!black}},
xlabel style={font=\footnotesize},
xticklabel style={font=\footnotesize},
xlabel={$n$},
ymode=log,
ymin=0.01,
ymax=1,
yminorticks=true,
ylabel style={font=\color{white!15!black}},
ylabel style={font=\footnotesize},
yticklabel style={font=\footnotesize},
axis background/.style={fill=white},
legend style={at={(0.95,0.05)}, anchor=south east, legend cell align=left, align=left, draw=white!15!black},
legend style = {font=\footnotesize}
  ]
  \addplot [thick, color=myblue, mark=x,
     mark options={solid, myblue}]
  table[row sep=crcr]{%
12	5.56e-2\\
14	1.73e-1\\
16	3.84e-1\\
18	7.26e-1\\
  };
\addlegendentry{\texttt{ttm}}

\addplot [thick, color=red, mark=o, 
    mark options={solid, red}]
  table[row sep=crcr]{%
12	5.49e-2\\
14	1.65e-1\\
16	5.18e-1\\
18	7.04e-1\\
};
\addlegendentry{\texttt{tmprod}}

\addplot [thick, color=green, mark=diamond, 
    mark options={solid, green}]
  table[row sep=crcr]{%
12	3.48e-2\\
14	1.12e-1\\
16	3.05e-1\\
18	5.25e-1\\
};
\addlegendentry{\texttt{tucker}}

\end{axis}

\end{tikzpicture}%
  \caption{\color{black}Wall-clock times for different realizations of the
    Tucker operator~\eqref{eq:tuckerop}
    with the functions \texttt{ttm}, \texttt{tmprod}, and \texttt{tucker}.
    The left plot refers to the case $d=3$, while the right plot
    refers to the case $d=6$.
    Each test has been repeated several times in order to avoid
    fluctuations.}
  \label{fig:codeval}
\end{figure}
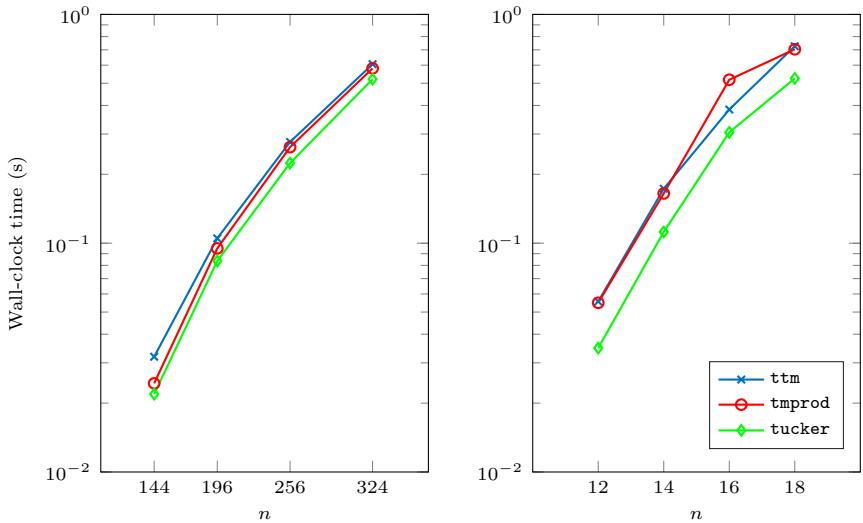
\subsection{Hermite--Laguerre--Fourier function
  decomposition}\label{sec:specdecex}
We are interested in the approximation, by means of a
pseudospectral decomposition,
of the trivariate function
\begin{equation*}
  f(\bx)=\frac{x_2^2\sin(20x_1)\sin(10x_2)\exp(-x_1^2-2x_2)}{\sin(2\pi x_3)+2},
  \quad \bx=(x_1,x_2,x_3)\in\Omega,
\end{equation*}
where $\Omega=[-b_1,b_1]\times[0,b_2]\times[a_3,b_3]$.
{\color{black}The decays in the first and second directions
  and the periodicity
  in the third direction
  suggest the use of a} Hermite--Laguerre--Fourier (HLF) expansion.
This mixed
transform is useful, for instance, for the solution of differential
equations with cylindrical coordinates by spectral methods,
see~\cite{BLS09}.
We then introduce the
\emph{normalized and scaled} Hermite functions
(orthonormal in $L^2(\RR)$)
\begin{equation*}
    \mathcal{H}^{\beta_1}_{i_1}(x_1)=\sqrt{\frac{\beta_1}{\sqrt{\pi}2^{i_1-1}
      (i_1-1)!}}H_{i_1}(\beta_1 x_1)\rme^{-\beta_1^2x_1^2/2}{\color{black},}
\end{equation*}
where $H_{i_1}$ is the (physicist's) Hermite polynomial of degree $i_1-1$.
We consider the $m_1$ scaled Gauss--Hermite quadrature points
$\{\xi_1^{k_1}\}_{k_1}$ and
define $\Psi_1\in\RR^{m_1\times m_1}$
to be the corresponding transform matrix with element
$\mathcal{H}^{\beta_1}_{i_1}(\xi^{k_1}_1)$ in position $(i_1,k_1)$.
The parameter
$\beta_1$ is chosen so that the quadrature points are contained
in $[-b_1,b_1]$ (see~\cite{T93}). This is possible by estimating the largest
quadrature point for the unscaled functions
by $\sqrt{2m_1+1}$ (see~\cite[Ch.~6]{S75}) and setting
\begin{equation*}
    \beta_1=\frac{\sqrt{2m_1+1}}{b_1}.
  \end{equation*}
Moreover, we consider the \emph{normalized and scaled} generalized Laguerre
functions (orthonormal in $L^2(\RR^+)$)
\begin{equation*}
  \mathcal{L}^{\alpha,\beta_2}_{i_2}(x_2)=
  \sqrt{\frac{\beta_2(i_2-1)!}{\Gamma(i_2+\alpha)}}
    L_{i_2}^{\alpha}(\beta_2 x_2)(\beta_2 x_2)^{\alpha/2}\rme^{-\beta_2 x_2/2}{\color{black},}
\end{equation*}
where $L_{i_2}^{\alpha}$ is the generalized Laguerre polynomial
of degree $i_2-1$.
We define $\Psi_2$  to be the corresponding transform matrix with element
$\mathcal{L}^{\alpha,\beta_2}_{i_2}(\xi^{k_2}_2)$ in position
$(i_2,k_2)$,
where $\{\xi_2^{k_2}\}_{k_2}$ are the $m_2$ scaled generalized
Gauss--Laguerre
quadrature points. The parameter $\beta_2$ is chosen, similarly to the
Hermite case, as
\begin{equation*}
    \beta_2=\frac{4m_2+2\alpha+2}{b_2},
  \end{equation*}
see \cite[Ch.~6]{S75} for the asymptotic estimate
{\color{black}which holds for $\lvert\alpha\rvert\ge 1/4$ and $\alpha>-1$}.
Finally, for the Fourier
decomposition, we obviously do not construct the transform matrix, but
we rely on a Fast Fourier Transform (FFT) implementation provided by the 
\matlab{} function \verb+interpft+, which performs a resample of the given
input values by means of FFT techniques.
We measure the approximation error,
for varying values of $n_\mu$, $\mu=1,2,3$,
by evaluating the pseudospectral decomposition
at a Cartesian grid of points $(x_1^{\ell_1},x_2^{\ell_2},x_3^{\ell_3})$,
with $1\le \ell_\mu\le n_\mu$. In
order to do that, we construct the matrices $\Phi_1$ and $\Phi_2$
containing the values of the Hermite and generalized Laguerre functions
at the points $\{x_1^{\ell_1}\}_{\ell_1}$ and $\{x_2^{\ell_2}\}_{\ell_2}$,
respectively. The relevant code for the approximation of $f$
and its evaluation, by using the \kP{} function \verb+tuckerfun+,
can be written as
\begin{verbatim}

PSIFUN{1} = @(f) PSI{1}*f;
PSIFUN{2} = @(f) PSI{2}*f;
PSIFUN{3} = @(f) f;
Fhat = tuckerfun(FW,PSIFUN);
PHIFUN{1} = @(f) PHI{1}*f;
PHIFUN{2} = @(f) PHI{2}*f;
PHIFUN{3} = @(f) interpft(f,n(3));
Ftilde = tuckerfun(Fhat,PHIFUN);

\end{verbatim}
where \verb+FW+ is the three-dimensional array containing
the values $f(\xi_1^{k_1},\xi_2^{k_2},\xi_3^{k_3})w_1^{k_1}w_2^{k_2}$,
{\color{black}where $\{\xi_3^{k_3}\}_{k_3}$ are the $m_3$ equispaced
  Fourier quadrature
points in $[a_3,b_3)$ and}
$\{w_\mu^{k_\mu}\}_{k_\mu}$, with $\mu=1,2$, are the scaled weights
of the Gauss--Hermite and generalized Gauss--Laguerre quadrature
rules, respectively. The values
$\{\xi_\mu^{k_\mu}\}_{k_\mu}$ and $\{w_\mu^{k_\mu}\}_{k_\mu}$, for $\mu=1,2$,
have been computed by the relevant Chebfun functions \cite{DHT14}.
The complete example can be found in the script 
\verb+example_spectral.m+, 

Given a prescribed accuracy, we look for the smallest number
of basis functions $(m_1,m_2,m_3)$ {\color{black}that achieve} it, and we measure 
the computational time needed to perform the approximation of $f$ and its evaluation
with the HLF method. As a term of comparison,
we consider the same experiment with a three-dimensional Fourier spectral 
approximation (FFF method): in fact,
for the size of the computational domain and the exponential decays
along the first and second
  {\color{black}directions}
of the function $f$  we are considering, it appears reasonable to
approximate $f$ by a periodic function in $\Omega$ and take advantage
of the efficiency of a three-dimensional FFT.

The results with $\alpha=4$,
$b_1=4$, $b_2=11$, $b_3=-a_3=1$, and $n_1=n_2=n_3=301$
evaluation points uniformly distributed in $\Omega$ are displayed in
Figure~\ref{fig:spectral}. 
As we can observe, the total number of degrees of freedom
needed by the HLF approach is always smaller than the
corresponding FFF one. In particular,
  despite the exponential decay along the second direction, the
  FFF method requires a very large number of Fourier coefficients
  along that direction in order to reach the most stringent accuracies. In
these situations,
the HLF method implemented with the $\mu$-mode approach
is preferable in terms of computational time to the well-established
implementation
by the FFT technique of the FFF method.

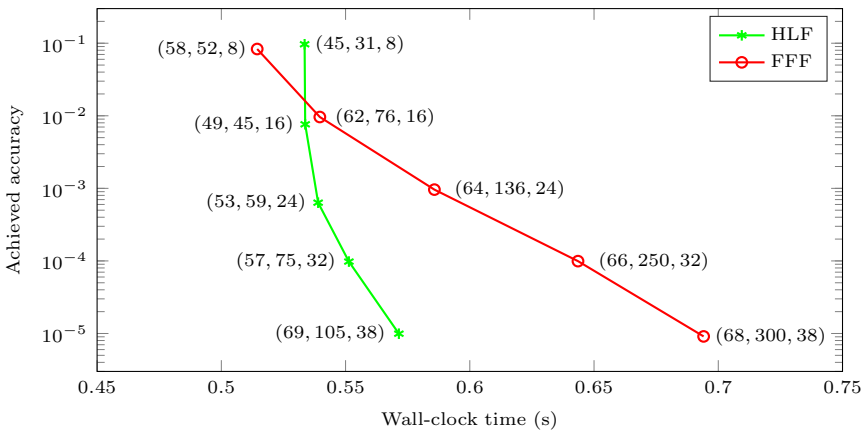
\begin{figure}[!ht]
  \centering
%
%
\begin{tikzpicture}

\begin{axis}[%
width=3.868in,
height=1.903in,
at={(0.78in,0.493in)},
scale only axis,
xmin=0.45,
xmax=0.75,
xlabel style={font=\color{white!15!black}},
xlabel={Wall-clock time (s)},
xlabel style={font=\footnotesize},
xticklabel style={font=\footnotesize},
ymode=log,
ymin=3e-06,
ymax=3e-1,
yminorticks=true,
ytick={1e-6,1e-5,1e-4,1e-3,1e-2,1e-1,1e0},
ylabel style={font=\color{white!15!black}},
ylabel={Achieved accuracy},
ylabel style={font=\footnotesize},
yticklabel style={font=\footnotesize},
axis background/.style={fill=white},
legend style={legend cell align=left, align=left, draw=white!15!black},
legend style = {font=\footnotesize}
]
\addplot [thick, color=green, mark=asterisk, mark options={solid, green}]
  table[row sep=crcr]{%
0.533518	0.0967219390298781\\
0.533745	0.00763343848248194\\
0.538953	0.000635001492530701\\
0.551342	9.79592289618978e-05\\
0.571458	9.93544678172208e-06\\
};
\addlegendentry{HLF}

\addplot [thick, color=red, mark=o, mark options={solid, red}]
  table[row sep=crcr]{%
0.514431	0.0831966928884739\\
0.539647	0.0096181125428191\\
0.585751	0.000960792365190478\\
0.643543	9.92904881693259e-05\\
0.694106	9.11375720188854e-06\\
};
\addlegendentry{FFF}

\node[left, align=left, font = \footnotesize]
at (axis cs:0.514,8.32e-02) {$(58,52,8)$};
\node[right, align=left, font = \footnotesize]
at (axis cs:0.534,9.67e-02) {$(45,31,8)$};

\node[left, align=left, font = \footnotesize]
at (axis cs:0.533,7.63e-03) {$(49,45,16)$};
\node[right, align=left, font = \footnotesize]
at (axis cs:0.542,9.62e-03) {$(62,76,16)$};

\node[left, align=left, font = \footnotesize]
at (axis cs:0.538,6.35e-04) {$(53,59,24)$};
\node[right, align=left, font = \footnotesize]
at (axis cs:0.59,9.61e-04) {$(64,136,24)$};

\node[left, align=left, font = \footnotesize]
at (axis cs:0.55,9.80e-05) {$(57,75,32)$};
\node[right, align=left, font = \footnotesize]
at (axis cs:0.648,9.93e-05) {$(66,250,32)$};

\node[left, align=left, font = \footnotesize]
at (axis cs:0.57,9.94e-06) {$(69,105,38)$};
\node[right, align=left, font = \footnotesize]
at (axis cs:0.695,9.11e-06) {$(68,300,38)$};
\end{axis}
\end{tikzpicture}%
  \caption{Achieved accuracies versus wall-clock times (in seconds) for the
    Hermite--Laguerre--Fourier (HLF) and the Fourier--Fourier--Fourier (FFF)
    approaches. The label of the marks in the plot indicates the number of basis
  functions used in each direction.}
  \label{fig:spectral}
\end{figure}

\subsection{Multivariate interpolation}\label{sec:funcapproxex}
Let us consider the approximation of a function $f(\bb x)$ through a
{\color{black}five-variate
interpolating polynomial in Lagrange form
\begin{equation}\label{eq:interp3d}
  p(\bx) = {\color{black}\sum_{i_5=1}^{m_5}\cdots\sum_{i_1=1}^{m_1}}
  f_{i_1\ldots i_5}L_{i_1}(x_1)\cdots L_{i_5}(x_5).
\end{equation}}%
Here $L_{i_\mu}(x_\mu)$ is the Lagrange polynomial of degree $m_\mu-1$
on a set $\{\xi^{k_\mu}_\mu\}_{k_\mu}$ of $m_\mu$ interpolation points written
in the second barycentric form, with $\mu=1,\ldots,5$, i.e.
\begin{equation*}
  L_{i_\mu}(x_\mu)=\frac{\frac{w^{i_\mu}_\mu}{x_\mu-\xi^{i_\mu}_\mu}}
  {\sum_{k_\mu}\frac{w^{k_\mu}_\mu}{x_\mu-\xi^{k_\mu}_\mu}},
  \quad
  w_\mu^{i_\mu} = \frac{1}{\prod_{k_\mu\neq i_\mu} (\xi_\mu^{i_\mu}-\xi_\mu^{k_\mu})},
\end{equation*}
while
{\color{black}$f_{i_1\ldots i_5} = f(\xi_1^{i_1},\ldots,\xi_5^{i_5})$.

  For our numerical example, we consider the {\color{black}five}-dimensional
  Runge function
\begin{equation*}
f(x_1,\ldots,x_5)=\frac{1}{1+16\sum_\mu x_\mu^2}
\end{equation*}
in the domain $[-1,1]^5$.} We choose as interpolation points
{\color{black}a Cartesian grid of} Chebyshev nodes
\begin{equation*}
  \xi_\mu^{k_\mu}=\cos\left(\frac{(2k_\mu-1)\pi}{2m_\mu}\right),\quad
  k_\mu=1,\ldots,m_\mu,
\end{equation*}
whose barycentric weights are
\begin{equation*}
  w_\mu^{k_\mu}=(-1)^{k_\mu+1}\sin\left(\frac{(2k_\mu-1)\pi}{2m_\mu}\right),\quad
  k_\mu=1,\ldots,m_\mu.
\end{equation*}
This is the {\color{black}five}-dimensional version of one of the examples
presented in
\cite[Sec.~6]{BT04}.
We evaluate the polynomial at a uniformly spaced Cartesian grid of points 
$(x_1^{\ell_1},\ldots,x_5^{\ell_5})$,
with $1\leq\ell_\mu\leq n_\mu$. Then,
approximation \eqref{eq:interp3d} at the just mentioned grid can be
computed
as
\begin{equation}\label{eq:laginterp}
\bb P = \bb F \times_1 L_1 \times_2 \cdots \times_5 L_5,
\end{equation}
where we collected the function evaluations at the interpolation points in the 
tensor $\bb F$ and $L_\mu$ contains the element $L_{i_\mu}(x_\mu^{\ell_\mu})$
in position $(\ell_\mu,i_\mu)$.
If we store the matrices $L_\mu$ in a cell \verb+L+, the corresponding
\matlab{} command for computing the desired approximation is
\begin{verbatim}

P = tucker(F,L);

\end{verbatim}
The results, for a number of evaluation points fixed to $n_\mu=n=35$ and 
varying number of interpolation points $m_\mu = m$, are reported in Figure \ref{fig:lagrange},
and the complete code can be found in the script \verb+example_interpolation.m+.
\begin{figure}[!ht]
\centering
%
%
\definecolor{mycolor1}{rgb}{0.00000,0.44700,0.74100}%
\begin{tikzpicture}

\begin{axis}[%
width=3.868in,
height=1.903in,
at={(0.766in,0.486in)},
scale only axis,
xmin=0,
xmax=50,
xtick={5,15,25,35,45},
xticklabels={{5},{15},{25},{35},{45}},
xlabel style={font=\footnotesize},
xticklabel style={font=\footnotesize},
xlabel={$m$},
ymode=log,
ymin=1e-6,
ymax=1,
ytick={1e-6,1e-4,1e-2,1e0},
yminorticks=true,
ylabel={Relative error},
ylabel style={font=\footnotesize},
yticklabel style={font=\footnotesize},
axis background/.style={fill=white},
legend style={legend cell align=left, align=left, draw=white!15!black},
legend style = {font=\footnotesize}
]
\addplot [ultra thick, color=mycolor1, only marks,
mark size = 2.5pt, mark=o, mark options={solid, mycolor1}]
  table[row sep=crcr]{%
5	0.330\\
15	0.0243\\
25	0.00189\\
35	0.000155\\
45	1.39e-05\\
};
\addlegendentry{Relative error}

\addplot [dashed, color=black]
  table[row sep=crcr]{%
5	0.27607\\
45	1.387e-5\\
};
\addlegendentry{Theoretical decay}
\end{axis}
\end{tikzpicture}%
\caption{Results for approximation \eqref{eq:laginterp} with an increasing
  number {\color{black}$m_\mu=m$}
  of interpolation points. 
  The relative error (blue circles) is computed 
         in maximum norm at the evaluation points. For reference, a dashed line 
         representing the theoretical decay estimate is added.}
\label{fig:lagrange}
\end{figure}
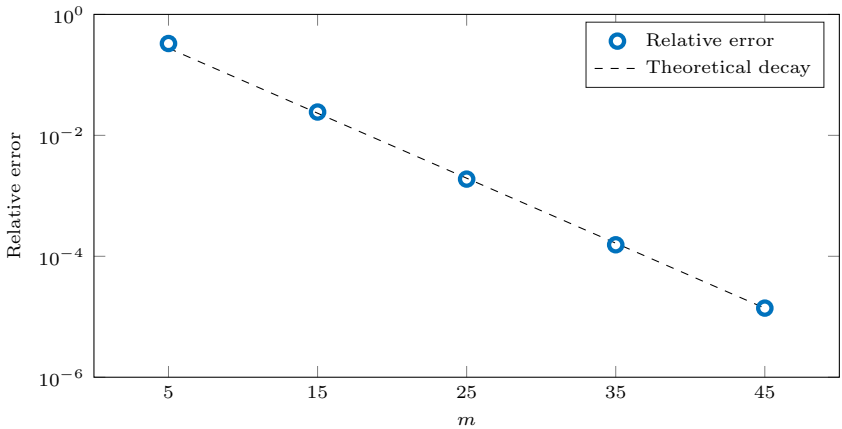

As expected, the error decreases according to the estimate
\begin{equation*}
\lVert f(\bb x) - p(\bb x) \lVert_\infty \approx K ^ {-m}, 
 \quad K = \frac{1}{4} + \sqrt{\frac{17}{16}},
\end{equation*}
see~\cite{BT04,T17}.

\subsection{Linear evolutionary equation}\label{sec:expex}
Let us consider the following
three-dimensional 
Advection--Diffusion--Absorption evolutionary
equation, written in conservative form,
for a concentration $u(t,\bb x)$   (see~\cite{ZK99})

\begin{equation}\label{eq:ada}
  \left\{\begin{aligned}
  &\partial_t u(t,\bb x) +
  \sum_{\mu=1}^3\beta_\mu\partial_{x_\mu}( x_\mu u(t,\bb x))
  =\alpha\sum_{\mu=1}^3\beta_\mu^2\partial_{x_\mu}( x_\mu^2\partial_{x_\mu} u(t,\bb x))-\gamma
u(t,\bb x),\\
  &u(0,\bb x)=u_0(\bb x)=x_1(2-x_1)^2x_2(2-x_2)^2x_3(2-x_3)^2,
  \end{aligned}\right.
\end{equation}
where $\beta_\mu$, $\mu=1,2,3$, and $\alpha>0$ are advection and diffusion coefficients
and $\gamma\ge0$ is a coefficient governing the decay of $u(t,\bb x)$.
After a space discretization by second order centered finite differences
{\color{black}on a Cartesian grid},
we end up with a system of ODEs
\begin{equation}\label{eq:ode}
  \left\{
  \begin{aligned}
    \boldsymbol u'(t)&=(A_3\oplus A_2\oplus A_1)\boldsymbol u(t),\\
    \bu(0)&=\bu_0,
\end{aligned}\right.
  \end{equation}
where
$A_\mu\in\RR^{n_\mu\times n_\mu}$ is
the one-dimensional discretization of the operator
\begin{equation*}
  (2\alpha\beta_\mu^2x_\mu-\beta_\mu x_\mu)\partial_{x_\mu}+\alpha\beta_\mu^2 x_\mu^2\partial_{x_\mu^2}
  -\left(\beta_\mu+\frac{\gamma}{3}\right).
\end{equation*}
If we denote by $\boldsymbol U_0=\mathrm{vec}(\bu_0)$
and $\boldsymbol U(t)=\mathrm{vec}(\bu(t))$ the tensors
associated to the vectors $\boldsymbol u_0$ and $\boldsymbol u(t)$,
respectively, then we have
\begin{equation}\label{eq:exp3d}
  \boldsymbol U(t)=\boldsymbol U_0\times_1 \exp(t A_1)\times_2
\exp(t A_2) \times_3 \exp(t A_3).
\end{equation}
We consider equation~\eqref{eq:ada} for $\bx\in[0,2]^3$, coupled
with homogeneous Dirichlet--Neumann
conditions ($u(t,\bx)=0$ at {\color{black}$x_\mu=0$ and
  $\partial_{x_\mu}u(t,\bx)=0$ at $x_\mu=2$, $\mu=1,2,3$}).
The coefficients are fixed to
\begin{equation*}
  \beta_1=\beta_2=\beta_3=\frac{2}{3},\quad \alpha=\frac{1}{2},
  \quad \gamma=\frac{1}{100}.
\end{equation*}
Then, if we compute the needed matrix exponentials by the function
\verb+expm+ in \matlab{} and define
\begin{verbatim}

E{mu} = expm(tstar*A{mu});

\end{verbatim}
the solution $\bb U(t^*)$ at final time $t^*=0.5$ can be computed as
\begin{verbatim}

U = tucker(U0,E);

\end{verbatim}
since the matrix exponential is the exact solution and thus no substepping
strategy is needed.
The complete example is reported in the script \verb+example_exponential.m+.

In Table~\ref{tab:exp} we show the results with
a discretization in space of $\bb n=(50,55,60)$ 
grid points.
Since the problem is moderately stiff,
we consider for comparison the solution
of system~\eqref{eq:ode} by the \verb+ode23+ \matlab{} function
(which implements an explicit
adaptive Runge--Kutta method of order (2)3) and
by a standard implementation of the explicit Runge--Kutta method of order four
(RK4). For the Runge--Kutta methods, we consider both
the \emph{tensor} and the \emph{vector} implementations,
using the functions \verb+kronsumv+ and \verb+kronsum+, respectively
(see equivalence~\eqref{eq:actkron}).
The number of uniform time steps for RK4 has been chosen in order to obtain
a comparable error with respect to the result of the variable
time step solver \verb+ode23+.
As we can see,
the tensor formulation~\eqref{eq:exp3d} implemented using the
function \verb+tucker+ is much faster than any other considered approach.
Indeed, this is due to the fact that formula~\eqref{eq:exp3d}
requires a single time step and calls a level 3 BLAS routine
only three times. For other experiments involving the approximation
of the action of the matrix exponential
in tensor-structured problems,
we invite the reader to {\color{black}check~\cite{CCEOZ22}}.
\begin{table}[!ht]
  \centering
  \begin{tabular}{|c|c|c|c|c|}
    \hline
                 & Time steps & Elapsed time vector & Elapsed time tensor & Error\\
\hline
    \verb+tucker+ & 1 & -- & 0.03 & -- \\
    \verb+ode23+ & 1496 & 14.0 & 11.2 & 1.0e-4\\
    RK4          & 1351 & 9.14 & 6.33 & 3.7e-5\\
    \hline
  \end{tabular}
    \caption{Summary of the results for solving the ODEs system~\eqref{eq:ode}
    with the three described approaches.
    We report the number of time steps, the wall-clock times in seconds
    for both the tensor
    and the vector formulations (when feasible) and the relative error
    in infinity norm of the final solution with respect to the solution
    given by the \texttt{tucker} approach.}
  \label{tab:exp}
  \end{table}

\subsection{Semilinear evolutionary equation}\label{sec:precex}
We consider the following three-dimensional semilinear evolutionary equation
\begin{equation}\label{eq:semilinear}
  \left\{
  \begin{aligned}
    &\partial_t u(t,\bx) = \Delta u(t,\bx) + \frac{1}{1+u(t,\bx)^2} + \Phi(t,\bx),\\
    &u(0,\bx) = u_0(\bx)=x_1(1-x_1)x_2(1-x_2)x_3(1-x_3){\color{black},}
  \end{aligned}
  \right.
\end{equation}
for $\bx\in[0,1]^3$,
where the function $\Phi(t,\bx)$ is chosen so that the exact solution is
$u(t,\bx) = \rme^t u_0(\bx)$. We complete the equation with homogeneous Dirichlet
boundary conditions in all the directions. 
This is the three-dimensional generalization of the example
presented in \cite{HO05}.

We discretize the problem in space by means of second order centered finite 
differences {\color{black}on a Cartesian grid},
with $n_\mu$ grid points for the spatial variable $x_\mu$, 
$\mu=1,2,3$. Then, the application of the backward-forward Euler method 
leads to the following marching scheme
\begin{equation}\label{eq:semilin-disc}
  M\boldsymbol{u}_{k+1} = \boldsymbol{u}_k +
  \tau\boldsymbol{f}(t_k,\boldsymbol{u}_k){\color{black},}
\end{equation}
where $\boldsymbol{u}_{k}\approx u(t_k,\boldsymbol{x})$,
$\tau$ is the
time step size, $t_k$ is the current time and
\begin{equation*}
  \boldsymbol{f}(t_k,\boldsymbol{u}_k) =
  \frac{1}{1+\boldsymbol{u}_k^2} + \Phi(t_k,\boldsymbol{x}).
\end{equation*}
The matrix of the linear system~\eqref{eq:semilin-disc} is given by
\begin{equation*}
  M = M_3 \oplus M_2 \oplus M_1, \quad
  M_\mu = \left(\frac{1}{3}I_\mu - \tau A_\mu\right),
\end{equation*}
where $A_\mu$ is the discretization of the partial differential operator
$\partial_{x_\mu^2}$ and $I_\mu$ is the identity matrix of size $n_\mu$.
One could solve the linear system \eqref{eq:semilin-disc} using a direct
method, in particular by computing the Cholesky factors of the matrix $M$
once and for all (if the step size $\tau$ is constant). Another approach
would be to use the Conjugate Gradient (CG) method for
the single marching step~\eqref{eq:semilin-disc}. In  \matlab{}, the latter
can be performed as
\begin{verbatim}

pcg(M,uk+tau*f(tk,uk),tol,maxit,[],[],uk);

\end{verbatim}
or
\begin{verbatim}

pcg(Mfun,uk+tau*f(tk,uk),tol,maxit,[],[],uk);

\end{verbatim}
where \verb+M+ is the matrix assembled using \verb+kronsum+ (vector approach),
while \verb+Mfun+ is implemented by means of the function \verb+kronsumv+
(tensor approach).
As described in Section~\ref{sec:prec12d}, an effective preconditioner
for system~\eqref{eq:semilin-disc} is the one of ADI-type
\begin{equation*}
  P_3 \otimes P_2 \otimes P_1, \quad P_\mu = (I_\mu - \tau A_\mu).
\end{equation*}
The action of the inverse of this preconditioner on a vector $\bb v$ can be
easily performed in tensor formulation, see formula~\eqref{eq:preconnd},
and the resulting Preconditioned Conjugate Gradient (PCG) method
is
\begin{verbatim}

pcg(Mfun,uk+tau*f(tk,uk),tol,maxit,Pfun,[],uk);

\end{verbatim}
where \verb+Pfun+ is implemented through the \kP{} function \verb+itucker+.
The complete example is reported in the file \verb+example_imex.m+.

In Table~\ref{tab:semilin} we report the results obtained for a space  discretization
of $\bb n=(40,44,48)$ grid points. The time step size of the marching
scheme~\eqref{eq:semilin-disc} is $\tau=0.01$ and the final time of integration
is $t^*=1$. For all the methods, the final relative error with respect to the
exact solution measured in infinity norm is $9.7\cdot 10^{-3}$.
As it is clearly shown, the ADI-type preconditioner is really
effective in reducing the number of iterations of the CG method. Moreover,
the resulting method is the fastest among
all the considered approaches.

\begin{table}[ht!]
  \centering
  \begin{tabular}{|c|c|c|c|}
\hline
    &Avg.~iterations&  \multirow{2}{*}{\centering Elapsed time}\\
    & per time step & \\ 
     \hline
     Direct     & -- &  6.7 \\
     CG vector  & 30 &  3.3 \\
     CG tensor  & 30 &  2.2\\
     PCG tensor & 2  &  0.5\\
     \hline
  \end{tabular}
  \caption{Summary of the results for solving the semilinear
    equation~\eqref{eq:semilinear} by the method of lines
    and the backward--forward Euler method. The elapsed time is the
    wall-clock time measured in seconds.}
  \label{tab:semilin}
\end{table}
\section{Conclusions} \label{sec:conc}
In this work, we presented how it is possible to state
$d$-dimensional tensor-structured problems by means of composition of
one-dimensional rules, in such a way that the resulting $\mu$-mode
BLAS formulation 
can be efficiently implemented on modern computer hardware. The common thread 
consists in the suitable employment of tensor-product operations, with special 
emphasis on the Tucker operator and its variants.
{\color{black}
After validating our
package \kP{} against other commonly used tensor operation toolboxes,
  the effectiveness of the $\mu$-mode approach
compared to other well-established techniques
  is shown on several examples from different fields of 
numerical analysis. More in detail, we employed this approach} for 
a pseudospectral Hermite--Laguerre--Fourier trivariate function
decomposition, 
for the barycentric Lagrange interpolation of a {\color{black}five}-variate
function and for 
the numerical solution of three-dimensional stiff linear and semilinear 
evolutionary differential equations by means of exponential techniques and 
a (preconditioned) IMEX method, respectively.

\bmhead{Acknowledgments}
The authors acknowledge partial support from the Program Ricerca di Base
2019 of the University of Verona entitled ``Geometric Evolution of
Multi Agent Systems''.
Franco Zivcovich has received funding from the European Research Council (ERC) under the 
European Union’s Horizon 2020 research and innovation programme
(grant agreement No. 850941).
\section*{Declarations}
\bmhead{Data availability}
Data sharing not applicable to this
  article as no datasets were generated or analyzed during the current study.
  \bmhead{Conflict of interest}
  The authors declare that they have no conflict
  of interest.
  \begin{appendices}
\section{}\label{app:A}
Throughout the manuscript, the symbol $\otimes$ denotes the standard Kronecker
product of two matrices. In particular, given $A\in\mathbb{C}^{m\times n}$
and $B\in\mathbb{C}^{p\times q}$, we have
\begin{equation*}
A \otimes B = \left[
\begin{array}{ccc}
a_{11}B & \cdots & a_{1n}B \\
\vdots & \ddots & \vdots \\
a_{m1}B & \cdots & a_{mn}B
\end{array}\right] \in \CC^{mp\times nq}.
\end{equation*}

Moreover, we define the Kronecker sum of two matrices 
$A\in\mathbb{C}^{m\times m}$ and $B\in\mathbb{C}^{p\times p}$, denoted by the
symbol $\oplus$, as
\begin{equation*}
A \oplus B = A \otimes I_B + I_A \otimes B \in \CC^{mp \times mp} ,
\end{equation*}
where $I_A$ and $I_B$ are identity matrices of size $m$ and $p$, respectively.

We define also the vectorization operator,
denoted by $\mathrm{vec}$, which stacks
a given tensor $\bb T \in \CC^{m_1\times\cdots\times m_d}$ in a vector 
$\bb v \in \CC^{m_1\cdots m_d}$ in such a way that
\begin{equation*}
\mathrm{vec}(\bb T) = \bb v,\ \text{with}\ 
\bb v_{j} = \bb T(j_1,\ldots,j_d),
\quad j= j_1 + \sum_{\mu=2}^{d}(j_\mu-1)\prod_{k=1}^{\mu-1}m_k,
\end{equation*}
where $1 \leq j_\mu \leq m_\mu$ and $1 \leq \mu \leq d$.

The Kronecker product satisfies many properties, see \cite{L00} 
for a comprehensive review.
For convenience of the reader, we list here the relevant ones in our context 

\begin{enumerate}
    \item $A \otimes (B_1+B_2) = A \otimes B_1 + A \otimes B_2$
    for every $A\in\mathbb{C}^{m\times n}$ and $B_1,B_2 \in\mathbb{C}^{p\times q}$;
    \item $(B_1+B_2) \otimes A = B_1 \otimes A + B_2 \otimes A$
    for every $B_1,B_2 \in\mathbb{C}^{p\times q}$ and $A\in\mathbb{C}^{m\times n}$;
    \item $(\lambda A) \otimes B = A \otimes (\lambda B) = \lambda (A \otimes B)$
    for every $\lambda\in\CC$, $A\in\mathbb{C}^{m\times n}$ and
    $B \in\mathbb{C}^{p\times q}$;
    \item  $(A\otimes B) \otimes C = A \otimes (B \otimes C)$ for every
    $A\in\mathbb{C}^{m\times n}$, $B \in\mathbb{C}^{p\times q}$ and
    $C \in\mathbb{C}^{r\times s}$;
    \item $(A\otimes B)^{\sf T} = A^{\sf T}\otimes B^{\sf T}$ for every 
          $A\in\mathbb{C}^{m\times n}$ and $B \in\mathbb{C}^{p\times q}$;
    \item $(A\otimes B)^{-1} = A^{-1}\otimes B^{-1}$ for every invertible matrix
          $A\in\mathbb{C}^{m\times m}$ and $B \in\mathbb{C}^{p\times p}$;
    \item $(A \otimes B)(D \otimes E) = (AD)\otimes(BE)$ for every 
    $A\in\mathbb{C}^{m\times n}$, $B \in\mathbb{C}^{p\times q}$,
    $D \in\mathbb{C}^{n\times r}$ and $E \in\mathbb{C}^{q\times s}$;
    \item $\mathrm{vec}(ADC) = (C^{\sf T}\otimes A)\mathrm{vec}(D)$ for every
    $A\in\mathbb{C}^{m\times n}$, $D \in\mathbb{C}^{n\times r}$ and
    $C \in\mathbb{C}^{r\times s}$.
\end{enumerate}

\end{appendices}

  \bibliography{biblio_mumode}

\end{document}